\theoremstyle{plain}
\newtheorem{theorem}{Theorem}[section]
\newtheorem{lemma}[theorem]{Lemma}
\theoremstyle{definition}
\newtheorem{definition}[theorem]{Definition}
\newtheorem{remark}[theorem]{Remark}
\newtheorem{example}[theorem]{Example}
\newtheorem{question}[theorem]{Question}
\DeclareMathOperator{\rank}{rank}
\DeclareMathOperator{\Char}{char}
\renewcommand{\d}{\mathrm{d}}
\title{Linearizing algebraic matroids}
\author{Zvi Rosen}
\address{Department of Mathematics and Statistics, Florida Atlantic
University, Boca Raton, FL USA}
\email{rosenz@fau.edu}
\author{Jessica Sidman}
\address{Department of Mathematics and Statistics, Amherst College,
Amherst, MA USA}
\email{jsidman@amherst.edu}
\author{Louis Theran}
\address{School of Mathematics and Statistics, University of St
Andrews, St Andrews, Scotland}
\email{lst6@st-andrews.ac.uk}
\begin{document}

\begin{abstract}
  Although algebraic
  matroids were discovered in the 1930s, interest in them was largely
  dormant until their recent use in applications
  of algebraic geometry.
  Because nonlinear algebra is computationally challenging, it is
  easier to work with an
  isomorphic linear matroid
  if one exists.  We  describe an explicit construction
  that produces a linear
  representation over an algebraically closed field of characteristic
  zero starting with the data used in applications.
  We will also discuss classical examples of algebraic matroids
  in the modern language of polynomial ideals, illustrating how the
  existence of an
  isomorphic linear matroid depends on properties of the field.

\end{abstract}

\keywords{algebraic matroids, applied algebraic geometry, combinatorics}
\subjclass[2020]{05B35, 12-08, 14Q99, 13P25}
\maketitle

\section{Introduction}
Recent applications of algebraic geometry to areas including
algebraic statistics \cites{GS,HS}, chemical reaction networks \cites{FS,GHRS},
rigidity theory \cite{GGJ},
and coding theory \cite{BD},
have spurred interest in a reformulation of the definition of an
algebraic matroid in terms of polynomials.  In this definition,
the dependent sets of coordinates are those which are the support
of some polynomial in a prime ideal $P \subseteq k[x_1,\ldots, x_n]$ for some field $k$.
However, algebraic matroids
are difficult to work with computationally, since testing for independence
requires elimination theory.  It is more convenient to
work with an isomorphic {\em linear} matroid, in which  testing independence
can be done with linear algebra.  A widely-cited result of Ingleton \cite{I71}
says that passing to a linear matroid is possible if $k$ has
characteristic zero and one is willing to work in an extension of $k$.
However, a folklore extension of Ingleton's result says that, when
$k$ is algebraically closed, there exists a
$k$-linear representation.

We present both versions of Ingleton's result, using
elementary algebraic geometry and linear algebra.  The proof shows
that Ingleton's extension of $k$ may be taken to be
the field of fractions of $k[x_1, \ldots, x_n]/P$, denoted $\operatorname{Frac}(k[x_1, \ldots, x_n]/P)$.  
It is well known to algebraic geometers that, when $k$ is 
algebraically closed, one can pass from $\operatorname{Frac}(k[x_1, \ldots, x_n]/P)$ 
to $k$ itself, but this fact is usually presented in scheme-theoretical language.  
We give a more accessible exposition.
We also discuss why the hypotheses that $k$
has characteristic zero and is algebraically closed cannot always be
dropped, as well
as a common situation where the latter hypothesis can be relaxed.

To illustrate the flavor of how algebraic matroids are used in
applications, consider the
following simple example from statistical inference.
\begin{example}[cf {\cite[Example 3.1.6]{owl.pdf}}] \label{ex: prob}
  Let $X_1$ and $X_2$ be 0/1 random variables with $\mathbb{P}(X_1 =
  0) = p_0$ and $\mathbb{P}(X_2 = 0) = q_0$  so that
  $p_1 = 1 - p_0$ and $q_1 = 1 - q_0$ are the probabilities that
  $X_1$ and $X_2$ are 1, respectively.
  If we define the joint distribution $x_{ij} = \mathbb{P}(X_1 = i,X_2 = j)$
  we see that $X_1$ and $X_2$ are independent if and only if $x_{ij}
  = p_iq_j$. Collecting the four probabilities $x_{ij}$ into the matrix
  \[
    \begin{pmatrix}
      x_{00} & x_{10} \\
      x_{01} & x_{11}
    \end{pmatrix}
    =
    \begin{pmatrix}
      p_0 q_0 & p_1 q_0 \\
      p_0 q_1 & p_1 q_1
    \end{pmatrix}
    =
    \begin{pmatrix}
      q_0 \\ q_1
    \end{pmatrix}
    \begin{pmatrix}
      p_0 & p_1
    \end{pmatrix},
  \]
  reveals an algebraic condition for independence: the matrix of
  the joint distribution has rank
  one. Alternatively, if
  $X_1$ and $X_2$ are independent, then
  the $x_{ij}$ satisfy the polynomial relation $x_{00}x_{11} -
  x_{10}x_{01} = 0$.
  We see from this derivation that the four quantities $x_{ij}$ in this {\em
  independence model} are algebraically dependent.
  For a general choice of any three, e.g., $x_{00} = x_{11} = x_{01}
  = \frac{1}{4}$, we can compute the last one, e.g.,
  $x_{10} = (x_{00}x_{11}) / x_{01} = \frac{1}{4}$.  (To ensure that 
  we get a probability distribution, as happens here, we need to add the 
  inhomogeneous affine constraint $x_{00} + x_{01} + x_{10} + x_{11} = 1$, 
  and non-negativity constraints $x_{ij}\ge 0$.  The practice in algebraic 
  statistics is to separate these from the  rank constraint to get a 
  homogeneous ideal, which can be studied algebro-geometrically, and then intersect with 
  the simplex later.)
\end{example}
The preceding example has all the key ingredients motivating applied
interest in algebraic matroids.  The
coordinates have a meaningful interpretation (here, as
probabilities $x_{ij}$), and these coordinates
must satisfy some nonlinear polynomial relations imposed by the
application (here, the determinant certifying that $X_1$ and $X_0$
are independent).
It is then natural to ask the purely combinatorial question: how do
we capture which subsets of
coordinates are dependent and which are independent?  This is precisely the data captured by a matroid.

\begin{definition}\label{def: matroid}
  Let $E$ be a finite set and $\mathcal{I}$ be a collection of
  subsets of $E$ satisfiying the following properties:
  \begin{enumerate}
    \item $\emptyset \in \mathcal{I}$
    \item if $I \in \mathcal{I}$, then every subset of $I$ is in $\mathcal{I}$
    \item if $I, I' \in \mathcal{I}$ and $|I| < |I'|$ there exists $x
      \in I'\backslash I$ such that $I \cup \{x\}$ is in $\mathcal{I}.$
  \end{enumerate}
  We define $M = (E, \mathcal{I})$ to be a {\em matroid} with ground
  set $E$ and {\em independent sets} in $\mathcal{I},$ and any
  subset of $E$ not in $\mathcal{I}$ is called {\em dependent}.
\end{definition}
\begin{example}\label{ex: small matroid}
Let $E = \{1,2,3\}$ and $\mathcal{I} = \{\emptyset, \{1\},\{2\},\{3\},\{1,3\},\{2,3\}\}$.
One can check that $(E, \mathcal{I})$ is a matroid.
\end{example}
We will be
interested in two ways that these abstract matroids can be
represented as familiar objects and how these objects are related to
one another:
\begin{definition}\label{def: algebraic matroid}
  Let $M = (E, \mathcal{I})$ be a matroid with a ground set of size
  $n$ and $k$ be a field.
  \begin{enumerate}
    \item We say that $M$ is {\em $k$-representable} if
      there is a $k$-vector space $V$ and a map $\varphi : E\to V$ so
      that $I\in \mathcal{I}$ if and only if $\varphi(I)$ is
      linearly independent.

    \item   We say that $M$ is {\em $k$-algebraic} if there is
      a prime ideal $P\subseteq k[x_1, \ldots, x_n]$
      and a map $\varphi : E\to \{x_1, \ldots, x_n\}$ so that $I\in
      \mathcal{I}$ if and only if $P\cap k[\varphi(I)] = \langle 0\rangle$; i.e., 
      that $\varphi(I)$ is algebraically independent mod $P$.
  \end{enumerate}
\end{definition}
Informally, $M$ is $k$-representable if its independent sets correspond
to the linearly independent subsets of a
vector configuration in a $k$-vector space.
That this gives a matroid follows immediately from basis extension
theorems in linear algebra.  Indeed, the word
``matroid'' is meant to indicate that it generalizes the independence
structure determined by the columns of a matrix.  Finally, verifying linear independence of vectors can be 
performed via Gaussian elimination, which is efficient in terms of the number of additions and 
multiplications in $k$.

\begin{example}\label{ex: small matroid rep}

The matroid from Example \ref{ex: small matroid} is $\mathbb{R}$-representable.  We can take 
$\varphi$ to map $i$ to the $i$th column of the matrix 
\[
\begin{pmatrix}
    1 & -1 & 0 \\
    0 & 0  & 1
\end{pmatrix}. \qedhere
\]

\end{example}

Algebraic matroids are
more complicated.  Let $P\subseteq k[x_1, \ldots, x_n]$ be a prime ideal and $M(P)$ denote the induced algebraic matroid.  
From the definition, a set of variables $I$ is independent in $M(P)$ if
the ideal $P$ does not contain a polynomial supported
on $I$, and verifying this requires {\em polynomial} elimination, which is computationally
difficult.  
\begin{example}\label{ex: small matroid algebraic}
The matroid from Example \ref{ex: small matroid} is also $\mathbb{R}$-algebraic, as is 
any $\mathbb{R}$-representable matroid.  For example, we can set 
$P = \langle x_1 + x_2\rangle\subseteq \mathbb{R}[x_1,x_2,x_3]$.  Then 
the map $\varphi(i) = x_i$ certifies that the matroid is algebraic.
\end{example}


Our main topic is how to move from an algebraic matroid to an 
equivalent linear one.  Here we show how to do it for the 
algebraic matroid from Example \ref{ex: prob}.
\begin{example}\label{ex: diff}
  Let $P = \langle x_{00}x_{11} -
  x_{10}x_{01} \rangle \subseteq \mathbb{R}[x_{00}, x_{01}, x_{10},
  x_{11}]$ be the prime ideal generated by the equation from Example
  \ref{ex: prob} and $M(P)$ be the
  associated algebraic matroid.  Because $P$ has a single generator
  supported on all four variables, the independent sets of $M(P)$ are all the
  subsets of $\{x_{00}, x_{01}, x_{10}, x_{11}\}$ of size at most
  three.  Hence, any sufficiently general choice of $\varphi(x_{ij})
  \in \mathbb{R}^3$
  demonstrates that $M(P)$ is $\mathbb{R}$-representable.

  However, let us proceed in a more principled way, using the
  presentation of the
  ideal $P$ and differentials, which we will define formally below 
  but behave in a way that is  familiar from calculus.  
  Let $\{\d x_{00}, \d x_{01}, \d x_{10}, \d x_{11}\}$ be a basis for a
  vector space $V$ and $U \subseteq V$ the subspace spanned by
  \[
    \d( x_{00}x_{11} - x_{10}x_{01}) = x_{11}\d x_{00} - x_{10}\d
    x_{01} - x_{01}\d x_{10} + x_{00}\d x_{11}.
  \]
  The linear matroid corresponding to $M(P)$ is represented by the images of
  $\d x_{ij}$ in
  $V/U$. %
  %
  %
  The matrix of the inclusion $U\hookrightarrow
  V$ in the basis $\d x_{ij}$ is the transpose of
  \[
    J =
    \begin{pmatrix}
      x_{11} &-x_{10} & - x_{01} & x_{00}
    \end{pmatrix}.
  \]
  Therefore, by the first isomorphism theorem, the columns of any matrix
  $A$ with nullspace equal to the column
  space of $J^T$ generate a vector space that
  is linearly isomorphic to $V/U$. An example of
  such a matrix is
  \[
    A =
    \begin{pmatrix}
      1 & x_{11}/x_{10} & 0 & 0 \\
      1 & 0 & x_{11}/x_{10} & 0 \\
      1 & 0 & 0 & -x_{11}/x_{00} \\
    \end{pmatrix}.
  \]
  Under the
  isomorphism, the columns of $A$
  correspond to the images of $\d x_{ij}$ under the quotient map, and
  we get a representation from the columns of $A$.

  Up until now,
  we have been a little
  vague about the field.  Since the coefficients of $\d( x_{00}x_{11}
  - x_{10}x_{01})$ are
  polynomials in $\mathbb{R}[x_{ij}]$, the field we need is the field
  of fractions of $\mathbb{R}[x_{ij}]/P$, denoted
  $\operatorname{Frac}(\mathbb{R}[x_{ij}]/P)$.
  While this is a finitely generated extension of $\mathbb{R}$, it is
  not $\mathbb{R}$, so we have obtained a 
  $\operatorname{Frac}(\mathbb{R}[x_{ij}]/P)$-representation of $M(P)$.
  We call this construction the ``Jacobian step''.

  Now we find an $\mathbb{R}$-representation for $M(P)$ as follows.
  We imagine that
  $p_1 = \frac{2}{3}$ and $q_1 = \frac{1}{2}$ and then specialize the
  $x_{ij}$ to
  \[
    x_{00} = \frac{1}{6},\, x_{01} = \frac{1}{6}, x_{10} =
    \frac{1}{3}, x_{11} = \frac{1}{3},
  \]
  which specializes $A$ to the matrix
  \[
    A_{(1/6,1/6,1/3,1/3)} =
    \begin{pmatrix}
      1 & 2 & 0 & 0 \\
      1 & 0 & 1 & 0 \\
      1 & 0 & 0 & -1 \\
    \end{pmatrix}
  \]
  The columns of this last matrix give the same linear matroid as the
  columns of $A$, so we have found an $\mathbb{R}$-representation.
  We call this move the ``specialization step''.
\end{example}

In Example~\ref{ex: diff}, we constructed an $\mathbb{R}$-linear
matroid from an $\mathbb{R}$-algebraic
one.
This leads us to ask:
\begin{question}\label{q: alg implies lin}
  Given a prime ideal $P = \langle f_1, \ldots, f_m\rangle \subseteq
  k[x_1, \ldots, x_n]$,
  can we find a $k$-linear representation of $M(P)$?
\end{question}
Even more ambitiously, we ask whether the same construction 
always works.  The matrix $J$ is the
Jacobian of the polynomial generating $P$, so we could perform the same
computations when there are more generators.  There were two main steps:
the Jacobian step, where we construct a 
 matrix $A$ whose columns give a
$\operatorname{Frac}(k[x_1, \ldots, x_n]/P)$-representation of $M(P)$;
and the specialization step, in which we find a 
point $z\in V(P)$ so that specializing $A$ to $A_z$ gives an isomorphic
$k$-representation.  Thinking about the general
case, each step could go wrong.   If $k$ has  characteristic $p$,
then $\d( x^p) = 0$ even though $x^p$ is nonzero.  This opens the
possibility that $\d f_i$ and $f_i$ have different supports, and
then it is unclear that $A$ gives a representation of $M(P)$.  On
the other hand, if $k$ is not algebraically closed, $V(P)$ may not
have any smooth $k$-points, so our specialization step might not
even get started.
These issues were known in the 1970's and 1980's to 
A. Ingleton and  B. Lindström
who studied the closely related question:
\begin{question}\label{q: alg implies lin b}
  Given a matroid $M$ known to be algebraic, determine if $M$ is 
  representable.
\end{question}
The difference is that Question \ref{q: alg implies lin b} asks only
for existence, and
that the application-motivated Question \ref{q: alg implies lin} starts with a
specific algebraic representation and asks for a linear
representation over the same field.  Our goal in this paper is to
give an account of
Ingleton's and Lindström's work in modern language, and to describe a sufficient
condition for Question \ref{q: alg implies lin} to have a positive solution.

A theme going back nearly to the beginning of matroid theory is the
general project of classifying matroids by their $k$-representability
over different fields $k$, or the lack thereof.  Question \ref{q: alg implies lin b}
is a branch off this line of research, which, like the trunk, was
profoundly impacted by the discovery of the Mnëv--Sturmfels universality
phenomenon in the mid-1980s.  In particular, it seems that obtaining 
an answer to Question \ref{q: alg implies lin b} will be very difficult.
However, the work of Ingleton and Lindström made some fundamental
advances.

Lindström provides negative results in \cites{L83,L86}.
For every finite field $k$, he gives an example of a
matroid which is $k$-algebraic, but not $K$-representable for
{\em any} field $K$.  We will discuss Lindström's
example, the non-Pappus matroid,
in further detail in Section~\ref{sec: lindstrom}.
Ingleton's work, which is the first part of the next theorem, provides
a positive answer to Question \ref{q: alg implies lin b} when $k$ has
characteristic zero.  The second part of the theorem is an answer to
Question \ref{q: alg
implies lin}.
\begin{theorem}\label{thm: K/k-linear}
  Let $k$ be a field of characteristic zero and $M$ be a $k$-algebraic matroid.
  \begin{enumerate}
    \item There is a finitely generated
      extension $K/k$ such that $M$ is $K$-representable  \cite{I71};
    \item  If $k$ is, in addition, algebraically closed, then $M$ is
      $k$-representable.
  \end{enumerate}
\end{theorem}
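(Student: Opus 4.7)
For part (1), the plan is to work with Kähler differentials, generalizing Example~\ref{ex: diff}. Let $A = k[x_1,\ldots,x_n]/P$ and $K = \operatorname{Frac}(A)$. The relevant object is the $K$-vector space $\Omega_{K/k}$, which I would identify with $K^n$ modulo the $K$-span $U$ of $\{\d f_1, \ldots, \d f_m\}$ for any generating set $f_1,\ldots,f_m$ of $P$; equivalently, it is the cokernel of the Jacobian (reduced modulo $P$). Define $\varphi : E \to K^n/U$ by $x_i \mapsto \overline{\d x_i}$. The main claim is that $\varphi$ is a $K$-linear representation of $M(P)$, and this reduces to the classical Jacobian criterion for algebraic independence: in characteristic zero every finitely generated extension is separably generated, so $\{x_i : i \in I\}$ is algebraically independent over $k$ in $K$ if and only if $\{\overline{\d x_i} : i \in I\}$ is $K$-linearly independent in $K^n/U$. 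Since $M(P)$ is by definition the matroid of algebraic independence of the $x_i$ in $K/k$, this delivers (1) with $K = \operatorname{Frac}(k[x_1,\ldots,x_n]/P)$.

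For part (2), I would specialize the $K$-representation of (1) at a well-chosen $k$-point of $V(P)$. After clearing denominators, take a matrix $C$ with entries in $A$ whose columns represent $M(P)$ over $K$, and let $r = \rank M(P)$. For each basis $B$ of $M(P)$, the columns of $C_B$ have $K$-rank $r$, so some $r\times r$ minor $d_B \in A$ is nonzero, and $V(d_B) \subsetneq V(P)$ is a proper closed subvariety (because $A$ is a domain). The finite union $\bigcup_B V(d_B)$ is then proper closed in the irreducible affine variety $V(P)$. Because $k$ is algebraically closed, the Nullstellensatz implies that $V(P)(k)$ is Zariski dense in $V(P)$, so a $k$-point $z$ of $V(P)$ can be chosen outside every $V(d_B)$; specializing $C$ at $z$ yields a matrix $C_z$ with entries in $k$.

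It remains to check that the column matroid $M' := M(C_z)$ equals $M(P)$. Any $K$-linear dependence among the columns of $C$ specializes to a $k$-linear dependence of the corresponding columns of $C_z$, so every dependent set of $M(P)$ is dependent in $M'$. By the nonvanishing of the $d_B$, every basis $B$ of $M(P)$ consists of $k$-linearly independent columns of $C_z$, so $B$ is independent in $M'$ and $\rank M' = r$. Conversely, any basis of $M'$ is independent in $M(P)$ and of size $r$, so it is a basis of $M(P)$. The two matroids share the same bases, hence $M' = M(P)$, proving (2).

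The main obstacle sits in part (1): the commutative-algebraic fact that algebraic independence in $K/k$ is detected by $K$-linear independence of differentials in $\Omega_{K/k}$. Characteristic zero is essential, since in positive characteristic one needs a \emph{separating} transcendence basis, which generators of $P$ need not provide. Part (2) is then a routine Zariski-density argument whose one nontrivial ingredient, density of $k$-points in an irreducible $k$-variety, is exactly where the hypothesis that $k$ is algebraically closed is used.
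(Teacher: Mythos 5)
Your proposal follows the same architecture as the paper: for (1), build the $K$-vector space $\Omega_{K/k}$ with $K = \operatorname{Frac}(k[x_1,\ldots,x_n]/P)$ and represent $M(P)$ by the images of the $\d x_i$; for (2), specialize at a $k$-point of $V(P)$ chosen outside finitely many proper closed subsets. The real difference is in how part (1) is justified. You invoke, as a black box, the classical separability theorem that in characteristic zero a subset $\{x_i\}_{i\in I}$ of $K$ is algebraically independent over $k$ iff the differentials $\{\d x_i\}_{i\in I}$ are $K$-linearly independent in $\Omega_{K/k}$. The paper does \emph{not} cite this theorem; it proves the equivalence from scratch via two elementary support lemmas, Lemma~\ref{lem: diff spans} (the $\d$-image of $P$ lies in $U$) and Lemma~\ref{lem: diff support} (after clearing denominators every $v\in U$ is $\d g$ for some $g\in P$, and in characteristic zero the support of $\d g$ matches that of $g$). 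Citing the theorem is faster if one already has it, but the paper's route is self-contained and, importantly for the expository aims of Section~\ref{sec: lindstrom}, makes transparent exactly where characteristic zero enters (if $\partial g/\partial x_i = 0$ then $x_i\notin\operatorname{supp} g$) and hence why the construction can fail in characteristic $p$. For part (2) your argument is essentially Theorem~\ref{thm: specialization}, with the small streamlining of clearing denominators in the representing matrix first so that only the minor loci $V(d_B)$, and not also denominator loci, need excluding. One point to tighten: the claim that a $K$-linear dependence among columns of $C$ ``specializes'' to a $k$-linear dependence should be run through minors (as in the paper), since after clearing denominators in the coefficients of a dependence relation they could all vanish at $z$; arguing instead that all $r\times r$ minors of $C_B$ vanish in the domain $A$, hence at $z$, avoids this.
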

We note that part (2) of Theorem \ref{thm: K/k-linear} is the 
one which practitioners usually want to invoke, because  
computations in the field extension $K/k$ appearing in part (1) 
require a Gröbner basis.

In Section~\ref{sec: best case} we prove Theorem \ref{thm: K/k-linear} in the
language of ideals and varieties, giving explicit polynomial conditions to check in order to construct the 
$k$-representation in (2) (Theorem~\ref{thm: specialization}).  In
Section~\ref{sec: char zero}, we give an example, with $k = \mathbb{Q}$
indicating that the hypothesis on $k$ beyond characteristic zero is necessary
to make part (2) true.%
\footnote{In \cite{RST}, Ingleton's \cite{I71} Theorem \ref{thm: K/k-linear}.
(1) is incorrectly stated.} %
 We also discuss why the characteristic zero hypothesis is not needed in the case of parameterized varieties (Theorem~\ref{thm: param}).  Finally, in Section~\ref{sec: lindstrom} we describe Lindström's examples in the
updated language and sketch more recent work on Frobenius flocks, which provide a structured way to associate linear matroids to an algebraic matroid in characteristic $p$.

\section{The Best Case: Algebraically Closed and Characteristic Zero}\label{sec: best case}
To prove Theorem \ref{thm: K/k-linear}, we generalize the
reasoning in Example \ref{ex: diff}.  We begin by introducing some language for discussing algebraic geometry and differentials in Sections~\ref{sec: alg} and \ref{sec: diff}.  The main new element, which we discuss in Section~\ref{sec: special}, is that,
to prove part (2), we will need to use the additional hypothesis that
$k$ is algebraically closed to find a point on $V(P)$ to specialize
the linear representation provided by part (1).  With this setup the proof of Theorem~\ref{thm: K/k-linear} follows easily in Section~\ref{sec: proof}

\subsection{Algebraic varieties}\label{sec: alg}
To describe this construction it is helpful to use language from
algebraic geometry.  Let $S = k[x_1, \ldots, x_n]$.  For any
ideal $I\subseteq S$, the \emph{algebraic set} $V(I) = \{ z\in k^n : f(z) =
0\, \ \forall f\in S\}$
is the common vanishing locus of the polynomials in $I$.
The family
of algebraic subsets of $k^n$ is closed under finite unions and
arbitrary intersections.
If $P\subseteq S$ is a prime ideal, we call $V(P)$ an {\em irreducible variety}.
The next lemma, which we state without proof, is a consequence of the Nullstellensatz and describes two 
important properties of irreducible  varieties.
\begin{lemma}\label{lem: noth}
  Let $k$ be an algebraically closed field and $P\subseteq k[x_1,
  \ldots, x_n]$ a
  prime ideal.  Then the irreducible variety $X = V(P)$ is
  \begin{enumerate}[(a)]
    \item nonempty;
    \item not the union of a finite number of proper algebraic subsets of $X$.
  \end{enumerate}
\end{lemma}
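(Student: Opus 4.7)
The plan is to derive both parts from the Nullstellensatz: part (a) from the weak form, and part (b) from the strong form combined with primeness of $P$.

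For (a), I would observe that $P$ prime implies $1 \notin P$, so $P$ is a proper ideal of $k[x_1,\ldots,x_n]$. The weak Nullstellensatz over an algebraically closed field says precisely that a proper ideal has a common zero, yielding $V(P) \neq \emptyset$.

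For (b), the approach is contradiction. Suppose $X = V(P) = V(I_1) \cup \cdots \cup V(I_m)$ with each $V(I_j) \subsetneq X$. The strong Nullstellensatz gives $I(V(J)) = \sqrt{J}$ for every ideal $J$; applied to $P$, this yields $I(X) = \sqrt{P} = P$, where the second equality uses primeness. The strict inclusion $V(I_j) \subsetneq X$ translates, via the inclusion-reversing correspondence, to $I(V(I_j)) \supsetneq P$, so I can pick $f_j \in I(V(I_j)) \setminus P$ for each $j$. Then $f_1 \cdots f_m$ vanishes on the union $V(I_1) \cup \cdots \cup V(I_m) = X$, so it lies in $I(X) = P$. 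Since $P$ is prime, some $f_j$ must lie in $P$, contradicting the choice of the $f_j$.

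There is no serious obstacle; the point worth highlighting is that both hypotheses of the lemma are used exactly where one would expect — algebraic closedness to invoke either Nullstellensatz, and primeness both to identify $\sqrt{P}$ with $P$ and to extract a single factor from a product known to lie in $P$. Without primeness, part (b) would fail trivially (consider $P = \langle xy\rangle$, whose vanishing locus is a union of two coordinate axes), and without algebraic closedness, part (a) fails even for principal ideals (e.g.\ $P = \langle x^2 + 1\rangle$ in $\mathbb{R}[x]$).
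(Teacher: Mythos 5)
The paper states this lemma without proof, merely remarking that it is ``a consequence of the Nullstellensatz,'' so there is no argument in the text to compare against directly. Your proof is correct and takes exactly the route the paper hints at: the weak Nullstellensatz (plus $1 \notin P$) gives (a), and for (b) the strong Nullstellensatz identifies $I(X)$ with $P$, the inclusion-reversing bijection between algebraic sets and radical ideals lets you choose $f_j \in I(V(I_j)) \setminus P$, and primeness of $P$ applied to the product $f_1 \cdots f_m \in I(X) = P$ yields the contradiction. Your closing remarks on where each hypothesis is used are accurate and in the spirit of the paper's own observation that the lemma fails without algebraic closedness.
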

Lemma \ref{lem: noth} fails if $k$ is not algebraically closed.  For example 
if $P = \langle x^2 + 1\rangle\subseteq \mathbb{R}[x]$, then $V(P)$
is empty.

\subsection{Differentials}\label{sec: diff}
We also need a result about differentials.  We recall that
if $f\in S$ is a monomial not supported on $x_i$, then
$\frac{\partial}{\partial x_i} f = 0$;
otherwise, we can write $f = x_i^m g$, where $g$ is a monomial
not supported on $x_i$ and then $\frac{\partial}{\partial x_i} f =
mx_i^{m-1}g$.  The
operator $\partial/\partial x_i : S\to S$ is then defined by linear extension.
Now we let $P\subseteq S$ be a prime ideal and define $k(P)$ to be
the field of fractions of $S/P$; for $f\in S$, denote by
$[f]$ its coset in $S/P$.  If $\alpha\in k(P)$, it is of the
form $[p]/[q]$, where $p, q\in S$ and $[q]$ is nonzero.
Now we define $V$ to be  the
$k(P)$-vector space generated by $\{\d x_i\}$. The
differential $\d : S\to V$ is
then defined by
\[
  \d f = \left[\frac{\partial}{\partial x_1}(f)\right] \d x_1 + \cdots +
  \left[\frac{\partial}{\partial x_n}(f)\right]\d x_n.
\]
Since the differential satisfies
the product rule, if $f\in P$ and $q\in S$, we have
\begin{equation}\label{eq: prodrule}
      \d (qf) = [q]\d f + [f]\d q = [q]\d f,
\end{equation}
because $[f]$ is zero in $k(P)$.  Here is a simple observation, that is
true for any $k$.
\begin{lemma}\label{lem: diff spans}
  Let $P = \langle f_1, \ldots, f_m\rangle \subseteq S$
  a prime ideal.  Then the image $\d P$  of $P$ under the differential
  is contained in the $k(P)$-linear span $U\subseteq V$ of $\{\d f_1,
  \ldots, \d f_m\}$.
\end{lemma}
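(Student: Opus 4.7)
The plan is to take an arbitrary element $f \in P$ and show directly that $\d f$ lies in the $k(P)$-span of the $\d f_i$. Since $P = \langle f_1, \ldots, f_m\rangle$, by definition every element of $P$ can be written as an $S$-linear combination of the generators: $f = \sum_{i=1}^m q_i f_i$ for some $q_i \in S$. Applying the differential $\d$, which is $k$-linear by construction, we get
\[
  \d f = \sum_{i=1}^m \d(q_i f_i).
\]

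Next I would invoke the product rule, which was already established in the paragraph preceding the lemma as a consequence of how $\d$ was defined via partial derivatives. That gives $\d(q_i f_i) = [q_i] \d f_i + [f_i] \d q_i$. The key observation, which is exactly the content of the displayed equation before the lemma, is that $f_i \in P$ implies $[f_i] = 0$ in the quotient $S/P$ and hence in $k(P)$. So the second term vanishes and
\[
  \d f = \sum_{i=1}^m [q_i] \d f_i,
\]
which by definition is an element of $U$.

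There really is no serious obstacle here: once the product rule and the observation $\d(qf) = [q]\d f$ for $f \in P$ are in hand, the lemma follows from unpacking the definition of the ideal $\langle f_1, \ldots, f_m\rangle$. The only thing worth being careful about is that the coefficients $[q_i]$ live in $k(P)$ (via the inclusion $S/P \hookrightarrow k(P)$), which is fine because $V$ was defined as a $k(P)$-vector space and $U$ as its $k(P)$-span. It is also worth remarking that the argument uses nothing about the characteristic of $k$ or whether $k$ is algebraically closed, which is consistent with the statement that the lemma holds for any $k$.
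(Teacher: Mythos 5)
Your proof is correct and is essentially identical to the paper's: both decompose $g = \sum q_i f_i$, apply linearity and the product rule, and use $[f_i] = 0$ in $k(P)$ to kill the $[f_i]\,\d q_i$ terms. Incidentally, your final sentence states the conclusion ($\d f \in U$) more cleanly than the paper's, which has a small slip in its last line (saying $U$ is contained in the span rather than that $\d P$ is contained in $U$).
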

\begin{proof}
  Let $g\in P$.  Then there are polynomials $q_1, \ldots, q_m$ such that
  $g = q_1 f_1 + \cdots + q_m f_m$.  Using \eqref{eq: prodrule}, we compute
  \[
    \d g = ([q_1] \d f_1 + [f_1] \d q_1) + \cdots +  ([q_m]\d f_m +
    [f_m] \d q_m) =
    [q_1] \d f_1 + \cdots +  [q_m]\d f_m.
  \]
  Hence, $U$ is contained in the
  $k(P)$-linear
  span of the $\d f_i$.
\end{proof}
And now the technical result where we need characteristic zero.  For a
vector $v = \alpha_1\d x_1 + \cdots + \alpha_n \d x_n \in V$, we
define its support to be the set of $\d x_i$
with $\alpha_i \neq 0$.  We also recall that, for $f\in S$, the support
of $f$ is the set of $x_i$ so that
the variable $x_i$ appears in some monomial of $f$.
\begin{lemma}\label{lem: diff support}
  Let $P = \langle f_1, \ldots, f_m\rangle \subseteq S$
  a prime ideal and $k$ have characteristic zero.  Define $U$ to be the $k(P)$-linear span of $\{\d f_1, \ldots, \d f_m\}$.
  If $v$ is in $U$, 
  then there is a polynomial $g\in P$ so that
  $x_i$ is in the support of $g$ if and only if
  $\d x_i$ is in the support of $v$.
\end{lemma}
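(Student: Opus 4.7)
The plan is to use $k$-derivations on $k(P)$ to show that each variable $x_{i_0}$ with $i_0 \in I := \{i : \d x_i \in \operatorname{supp}(v)\}$ appears in some polynomial of $P$ whose support lies in $\{x_i : i \in I\}$, and then to take a generic scalar combination of these polynomials to produce one with support exactly $I$.

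The starting observation is that for any $k$-derivation $D : k(P) \to k(P)$, writing $v = \sum_{i \in I} v_i \, \d x_i$ as $\sum_j \beta_j \, \d f_j$ and applying the chain rule $D([f_j]) = \sum_i [\partial f_j/\partial x_i] \, D([x_i])$ gives $\sum_{i \in I} v_i D([x_i]) = \sum_j \beta_j D([f_j]) = 0$, since $[f_j] = 0$ in $k(P)$.

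Then, fixing $i_0 \in I$, I would argue that $[x_{i_0}]$ is algebraic over $L_{i_0} := k([x_i] : i \in I \setminus \{i_0\})$ by contradiction: if it were transcendental, the zero derivation on $L_{i_0}$ could be extended to a $k$-derivation $D$ on $k(P)$ with $D([x_{i_0}]) = 1$ --- first across the transcendental step $L_{i_0} \subseteq L_{i_0}([x_{i_0}])$, then across the algebraic step $L_{i_0}([x_{i_0}]) \subseteq k(P)$, which is separable because $k$ has characteristic zero --- and the identity above would force $v_{i_0} = 0$. Clearing denominators in a resulting minimal polynomial lifts to $\tilde g_{i_0} \in P \cap k[x_i : i \in I]$ in which $x_{i_0}$ genuinely appears.

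Finally, since $k$ is infinite, a sufficiently generic scalar combination $g := \sum_{i_0 \in I} \lambda_{i_0} \tilde g_{i_0} \in P$ has no cancellations, so $\operatorname{supp}(g) = \bigcup_{i_0 \in I} \operatorname{supp}(\tilde g_{i_0}) = I$ (each $\tilde g_{i_0}$ is supported in $I$ and contributes the index $i_0$). The main obstacle is the derivation extension step: the key fact that a $k$-derivation on $L_{i_0}$ extends across the algebraic part of $k(P)/L_{i_0}([x_{i_0}])$ relies on separability, which is automatic only in characteristic zero; in characteristic $p$, a $p$-th-power relation can block the extension, and the conclusion of the lemma can genuinely fail.
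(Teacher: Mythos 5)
Your proposal is correct, though it takes a genuinely different route from the paper's. The paper argues directly: after clearing denominators it writes $v = \d g$ for the explicit polynomial $g = q_1 f_1 + \cdots + q_m f_m \in P$, and then compares the support of $g$ with the support of $v$ coordinate by coordinate, using characteristic zero only to pass from $\partial g/\partial x_i = 0$ to $x_i$ not appearing in $g$. Your argument is more structural: you translate $v \in U$ into the vanishing of $\sum_i v_i\, D([x_i])$ for every $k$-derivation $D$ of $k(P)$, then use extension of derivations to show that each $[x_{i_0}]$ with $i_0 \in I$ is algebraic over $k([x_i] : i \in I \setminus \{i_0\})$, which (after clearing denominators in a defining relation) yields a polynomial $\tilde g_{i_0} \in P$ supported in $I$ and genuinely involving $x_{i_0}$; a generic $k$-linear combination then has support exactly $I$, using that $k$ is infinite. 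Your route makes the role of separability --- and therefore the failure in characteristic $p$ --- completely transparent, and it produces the desired polynomial without having to pin down the support of one particular $g$ satisfying $\d g = v$. The cost is heavier machinery (derivation extension plus a genericity argument) where the paper uses a short explicit computation.

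One small imprecision: you describe $L_{i_0}([x_{i_0}]) \subseteq k(P)$ as ``the algebraic step,'' but this extension need not be algebraic --- the cosets $[x_j]$ with $j \notin I$ can remain transcendental over $L_{i_0}([x_{i_0}])$. What your argument actually uses, and what is automatic in characteristic zero, is that $k(P)$ is a finitely generated, hence separably generated, extension of $L_{i_0}([x_{i_0}])$, so a $k$-derivation still extends across it. This does not affect the correctness of the argument, only the stated justification for the derivation extension.
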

\begin{proof}
  Let
  \[
    v = \alpha_1 \d f_{1} + \cdots + \alpha_m \d f_{m},
  \]
  be a given vector in $U$, with $\alpha_i = [q_i]/[p_i]$ and $[p_i]\neq 0$.
  Because $P$ is
  prime, $p = p_1\cdots p_m$ is not in $P$.  Hence $[p]v$ is nonzero
  and has the same
  support as $v$, so we may assume that
  \[
    v = [q_1] \d f_{1} + \cdots + [q_m] \d f_{m}.
  \]
  Let $g = q_1f_{1} + \cdots + q_mf_m$.  Because $f_i\in P$, \eqref{eq: prodrule}
  implies that 
  $v = \d g$.  Now suppose that $\d x_i$ is not
  in the support of $v$.  Then $(\partial/\partial x_i)g = 0$, which
  implies, because
  $k$ has characteristic zero,
  that $x_i$ is not in the support of $g$.  On the other hand, if
  $\d x_i$ is in the support of $v$, we must have
  $(\partial/\partial x_i)g \neq  0$ (true in any characteristic)
  which implies that
  $x_i$ is in the support of $g$.
  Hence, the two supports are equal.
\end{proof}

\subsection{Specialization}  \label{sec: special} In the proof of Theorem \ref{thm: K/k-linear} (2),
we use an intermediate result about linear matroids that is
interesting in its own right.
Let us set up terminology for some specific linear matroids.  For an
$r\times n$ matrix
$A$ in a field, denote by $M(A)$ the linear matroid with
ground set $\{1,\ldots, n\}$ and independent sets $I$ such that the columns of
$A$ indexed by $I$ are linearly independent. 
If $B\subseteq \{1,\ldots, n\}$ define
$A\llbracket B\rrbracket$ to be  the submatrix of $A$ with columns indexed
by $B$.  The last
concept we need is
the specialization of a matrix $A\in k(P)^{r\times n}$ to $k$; if
$z\in V(P)$, we denote
by $A_z\in k^{r\times n}$ the matrix we get by evaluating 
any coset representatives of
entries
of $A$ at $z$.
\begin{theorem}\label{thm: specialization}
  Let $k$ be an algebraically closed field of characteristic zero, $S
  = k[x_1, \ldots, x_n]$, and $P \subseteq S$ be a prime ideal.
  Suppose that $A$ is a matrix with entries in $k(P)$.  There exists
  a nonempty set $U \subseteq V(P)$ such that for every $u \in U,$
  $M(A) = M(A_u)$.
\end{theorem}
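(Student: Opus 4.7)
The plan is to encode the matroid $M(A)$ via the vanishing/non-vanishing pattern of the (finitely many) minors of the submatrices $A\llbracket B\rrbracket$, and to locate a point $u\in V(P)$ at which specialization preserves every such pattern. Over any field, the rank of a matrix is the largest $r$ such that some $r\times r$ minor is nonzero; applied to each $A\llbracket B\rrbracket$ over $k(P)$ and each $A\llbracket B\rrbracket_u$ over $k$, this reduces the theorem to the following task: choose $u\in V(P)$ so that every minor that is nonzero in $k(P)$ remains nonzero after specialization, and no denominator needed to form $A_u$ vanishes at $u$. Any such $u$ will give $M(A_u) = M(A)$.

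To execute this, I would enumerate the finite collection of all square-submatrix minors of every $A\llbracket B\rrbracket$, write each minor as $m = [p]/[q]$ with $q\notin P$, and collect into a finite list $D\subseteq S$ all such denominators $q$ together with the numerators $p$ of those minors $m$ that are nonzero in $k(P)$. No element of $D$ lies in $P$: for the denominators this holds by construction, and for the numerators it follows because $m\neq 0$ in $k(P) = \operatorname{Frac}(S/P)$ forces $[p]\neq 0$. For each $h\in D$, the set $V(P)\cap V(h)$ is a proper algebraic subset of $V(P)$: otherwise $h$ would vanish on $V(P)$, and the Nullstellensatz (using $k$ algebraically closed) together with the primeness of $P$ would force $h\in \sqrt{P} = P$, contradicting $h\notin P$. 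Since $D$ is finite, Lemma~\ref{lem: noth}(b) gives that $U := V(P)\setminus \bigcup_{h\in D} V(h)$ is nonempty.

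The final step is to confirm $M(A_u) = M(A)$ for every $u\in U$. Since all denominators in $D$ are nonzero at $u$, the specialization $A_u$ is well-defined. If a minor $m = [p]/[q]$ is zero in $k(P)$, then $p\in P$, so $p(u) = 0$ and $m$ specializes to $0$; if $m$ is nonzero in $k(P)$, then $p\in D$ and $p(u)\neq 0$, so $m$ specializes to a nonzero scalar in $k$. Thus every $A\llbracket B\rrbracket$ has the same rank after specialization as it does over $k(P)$, so $M(A_u) = M(A)$. The only potentially delicate point is the implication ``$h\notin P \Longrightarrow V(h)\cap V(P)\subsetneq V(P)$,'' which is exactly what the algebraic closure of $k$ and the primeness of $P$ supply; note that characteristic zero plays no role in this theorem, although it is needed elsewhere in the proof of Theorem~\ref{thm: K/k-linear}.
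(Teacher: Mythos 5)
Your proof is correct and follows essentially the same approach as the paper: collect a finite list of elements of $S \setminus P$ whose vanishing loci must be avoided, invoke Lemma~\ref{lem: noth} to find a point $u \in V(P)$ outside their union, and verify that the ranks of the $A\llbracket B\rrbracket$ are preserved under specialization. The one small difference is that you protect \emph{every} nonzero minor of every $A\llbracket B\rrbracket$ and so get rank-equality (hence matroid-equality) in a single stroke, whereas the paper protects only one certificate minor per basis of $M(A)$ and then argues the two inclusions of basis sets separately; your remark that characteristic zero is not actually used in this theorem is also correct.
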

\begin{proof}
  Because $k$ is algebraically closed, $X = V(P)$ is nonempty by Lemma \ref{lem: noth} (a).
  By Lemma \ref{lem: noth} (b), if $f\in S-P$ is nonzero, $V(P)\cap
  V(f)$ (which may be empty)
  is a proper  algebraic subset of $V(P)$.

  We describe the set $U$ by defining two algebraic subsets that we
  remove from $X$
  In order to evaluate $A$ at $z \in k^n$, we need to make sure that
  every element of $A$ is defined at $z$.
  Every nonzero entry $A_{ij}$ of $A$ has a representative
  $[f_{ij}]/[g_{ij}]$, where $f_{ij}, g_{ij}\in S-P$ are nonzero.  Let
  $D$ be the union of $V(g_{ij})$ over all entries $A_{ij}$. If $z
  \in V(P)-D$, every element of $A_z$ is defined.

  We may need to remove more of $V(P)$ to ensure that the specialized matrix
  $A_z$ has $M(A_z)$ equal to $M(A)$.
  Let $r = \rank M(A)$.  For each basis $B$ of
  $M(A)$ the columns of $A\llbracket B\rrbracket$  are linearly
  independent over $k(P)$.
  Hence, there exists an $r\times r$ minor $m_B$ that is nonzero in
  $k(P)$.  We may write $m_B = [f_B]/[g_B]$ where $f_B,g_B \in S-P$
  are nonzero. Let $W$ be the union of all sets $V(f_B)\cup V(g_B)$
  over all bases $B$ of $M(A)$.
  Define $U = V(P)- (D \cup W).$ By construction, $V(P)\cap (D\cup
  W)$ is the union of finitely many proper algebraic subsets of $V(P)$.
  By Lemma \ref{lem: noth} (b),     $V(P) \cap (D \cup W)\subsetneq
  V(P)$. Therefore, $U$ is nonempty.

  Let $u  \in U$.  Suppose that $B$ is a basis of $M(A)$.  There
  is a nonzero $r\times r$ minor of $A\llbracket B\rrbracket$, which
  is a rational
  function $m_B \in k(P)$ certifying that $A\llbracket B\rrbracket$ has linearly
  independent columns.
  Since $u \notin W$, if $B$ is a basis of $M(A)$ then
  $m_B(u)$, which is equal to the corresponding minor of $A_u$,
  is nonzero. Hence, the corresponding columns of $A_u$ are a basis in
  $M(A_u)$.

  For the other direction, suppose that $B$ is a basis of $M(A_u)$.
  Then there must be an $r\times r$ minor of  $A_u(B)$ that is nonzero
  in $k$.  Since this value is obtained by evaluating
  the corresponding minor of $A\llbracket B\rrbracket$ at $u$, it
  must be that this minor
  is nonzero in $k(P)$ and the columns of $A\llbracket B\rrbracket$ are linearly
  independent.  Since we have shown that $M(A)$ and
  $M(A_u)$ have the same bases,
  we conclude that the matroids are equal.
\end{proof}

\subsection{Proof of Theorem \ref{thm: K/k-linear}}\label{sec: proof}
\paragraph{(1)}
Let $S = k[x_1, \ldots, x_n]$ and $P = \langle f_1, \ldots,
f_m\rangle$ be prime.
Suppose that $k$ has characteristic zero.  Let $V$
be the $k(P)$-vector space spanned by $\d x_1, \ldots, \d x_n$
and let $U$ be the subspace of $V$ spanned by $\d f_1, \ldots, \d f_m$.
Define $\Omega_{k(P)/k}$ to be $V/U$.  We claim the linear matroid
defined by the images of the $\d x_i$ in $\Omega_{k(P)/k}$ is equal to
$M(P)$.

To this end, we first suppose that $D = \{x_{i_1}, \ldots, x_{i_s}\}$
is a dependent set in $M(P)$.
Then there is an $f\in P$ with its support supported contained in $D$.
Hence the support of $\d f$ is contained in $D' = \{\d x_{i_1},
\ldots, \d x_{i_s}\}$.
By Lemma \ref{lem: diff spans},
$\d f\in U$ and, hence, the set $D'$ is linearly dependent
in $\Omega_{k(P)/k}$.  For the other direction, we suppose that
$D = \{\d x_{i_1}, \ldots, \d x_{i_s}\}$ is dependent in the matroid of the $\d
x_i$ in $\Omega_{k(P)/k}$.
This means that there are nonzero coefficients $\alpha_1, \ldots,
\alpha_s\in k(P)$ such that
\[
  v = \alpha_1\d x_{i_1} + \cdots + \alpha_s \d x_{i_1} \in U.
\]
By Lemma \ref{lem: diff support}, there is a $g\in P$ with support
corresponding to that of $v$.
Hence $\{x_{i_1}, \ldots, x_{i_s}\}$
is dependent in $M(P)$.  Since we have a bijection between dependent sets,
the map $\varphi : \{x_1, \ldots, x_n\}\to \Omega_{k(P)/k}$ given by
$\varphi(x_i)\mapsto \d x_i$  shows that $M(P)$ is $k(P)$-representable.\\

\paragraph{(2)} Now suppose further that $k$ is algebraically closed.  Let
\[
  J =
  \begin{pmatrix}
    \frac{\partial}{\partial x_j}f_i
  \end{pmatrix}\in k(P)^{m\times n}
\]
be the Jacobian matrix of the given generators of $P$, and suppose that the
rank of $J$ is $n - r$.
Let $A\in k(P)^{r\times n}$ be a matrix such that the nullspace of
$A$ is equal to the
column space of $J^T$.  By the first isomorphism theorem, the column
space of $A$ is
linearly isomorphic to $\Omega_{k(P)/k}$, and, under the isomorphism the $i$th
column of $A$ gets mapped to the image of $\d x_i$ in $\Omega_{k(P)/k}$.  Hence,
the matroid $M(A)$ is isomorphic to the one constructed in part (1).  Since $k$ is
algebraically closed and of characteristic zero, Theorem \ref{thm: specialization}
implies that
there is a $z\in V(P)$ such that $M(A_z)$ and $M(A)$  are isomorphic.  Hence
$M(A_z)$ is a $k$-representation of the $k$-algebraic matroid $M(P)$.
\hfill $\qed$

We now present a worked example  
illustrating the construction in the proof of 
Theorem~\ref{thm: K/k-linear}.
\begin{example}
Let $S$ be the polynomial ring over $\mathbb{C}$ with variables given by the 
entries of the matrix 
\[
    X = \begin{pmatrix}
        x_{11} & x_{12} & x_{13} \\
        x_{21} & x_{22} & x_{23}
    \end{pmatrix}.
\]
The prime ideal $P$ cutting out the matrices of rank at most one is
\[
    P = \langle \begin{array}{ccc} x_{11}x_{22} - x_{12}x_{21}, &x_{11}x_{23} - x_{13}x_{21}, & x_{12}x_{23} - x_{13}x_{22} \end{array}\rangle \subseteq S.
\]
To find a $\mathbb{C}(P)$ representation of $M(P)$, we first compute the Jacobian of the generators
\[
    J = \left(
\begin{array}{cccccc}
 x_{22} & -x_{21} & 0 & -x_{12} & x_{11} & 0 \\
 x_{23} & 0 & -x_{21} & -x_{13} & 0 & x_{11} \\
 0 & \phantom{-}x_{23} & -x_{22} & 0 & -x_{13} & x_{12} \\
\end{array}
\right).
\]
Row reduction over $\mathbb{C}(P)$ yields the matrix 
\[
\left(
\begin{array}{cccccc}
 1 & 0 & -x_{21}/x_{23} & -x_{13}/x_{23} & 0 & x_{11}/x_{23} \\
 0 & 1 & -x_{22}/x_{23} & 0 & -x_{13}/x_{23} & x_{12}/x_{23} \\
 0 & 0 & 0 & 0 & 0 & 0 \\
\end{array}
\right),
\]
\noindent from which we read off the $\mathbb{C}(P)$ representation given by the columns of 
\[
\begin{blockarray}{cccccc}
 \phantom{-}\mathrm{d}x_{11} &  \phantom{-}\mathrm{d}x_{12} & \mathrm{d}x_{13} &
\mathrm{d}x_{21} & \mathrm{d}x_{22} & \mathrm{d}x_{23} \\[3mm]
\begin{block}{(cccccc)}
 \phantom{-}x_{21}/x_{23} & \phantom{-}x_{22}/x_{23} & 1 & 0 & 0 & 0 \\
 \phantom{-}x_{13}/x_{23} & 0 & 0 & 1 & 0 & 0 \\
 0 & \phantom{-}x_{13}/x_{23} & 0 & 0 & 1 & 0 \\
 -x_{11}/x_{23} & -x_{12}/x_{23} & 0 & 0 & 0 & 1 \\
\end{block}
\end{blockarray}\:\: .
\]

\noindent As a sanity check, from the zero pattern, we see immediately that 
$\{\d x_{11}, \d x_{13}, \d x_{21}, \d x_{23}\}$ and 
$\{\d x_{12}, \d x_{13}, \d x_{22}, \d x_{23}\}$ are dependent sets, 
which correspond to the generators $x_{11}x_{23} - x_{13}x_{21}$ and 
$x_{12}x_{23} - x_{13}x_{22}$ of $P$, respectively.  A direct computation also gives that 
$\{\d x_{11}, \d x_{12}, \d x_{21}, \d x_{22}\}$ is a dependent set.
If we then specialize $X$ to 
\[
\begin{pmatrix}
    1 & 2 & 3 \\
    1 & 2 & 3
\end{pmatrix},
\]
we get the matrix  
\[
\left(
\begin{array}{cccccc}
 \frac{1}{3} & \frac{2}{3} & 1 & 0 & 0 & 0 \\
 1 & 0 & 0 & 1 & 0 & 0 \\
 0 & 1 & 0 & 0 & 1 & 0 \\
 -\frac{1}{3} & -\frac{2}{3} & 0 & 0 & 0 & 1 \\
\end{array}
\right),
\]
which determines the same linear matroid.
\end{example}

We end this section with a remark on computational 
issues.
\begin{remark}
Theorem \ref{thm: K/k-linear}(1) gives us a  representation of our algebraic matroid $M(P)$ as a linear matroid over $k(P)$.
Using this, we may test for independence in $M(P)$ by applying Gaussian elimination to the corresponding vectors in a $k(P)$-vector 
space.  One may ask why this gives us any additional efficiency, since checking whether a row reduces to zero over $k(P)$ requires a Gr\"{o}bner basis, just as naive elimination in $P$ does! However, checking whether $[g] \equiv 0 \mod P$ can be done using a Gröbner basis for $P$ constructed using any term order, while constructing elimination ideals depends on a potentially much worse elimination ordering.

Passing to the $k$-representations given by Theorem \ref{thm: K/k-linear}(2)
requires finding a general point on $V(P)$, which is a computationally 
difficult problem.  Theorem \ref{thm: param}, in the next section, 
discusses a special case in which the $k$-representation can be 
found in polynomial time. 
\end{remark}

\section{What if the Field is Not Algebraically Closed?}\label{sec: char zero}

If $k$ has characteristic zero but is not algebraically closed we may
need to enlarge the field to be able to produce a linear
representation from an algebraic one.  Indeed, as a consequence of
Mnëv--Sturmfels universality \cites{M85,M88,M91,S87} (see also
\cite{RG95,AP17}) there exists a matroid $M$ that is $\mathbb{Q}$-algebraic
but not $\mathbb{Q}$-representable. In Example~\ref{ex: perles} we reproduce an example derived from Perles's configuration (see \cite[Example 12.3]{Pak10}) illustrating this phenomenon.  The construction of 
Theorem~\ref{thm: K/k-linear} fails in this example because the variety corresponding to the given ideal has no smooth rational points.

\begin{example}
\label{ex: perles}

  Let $K =\mathbb{Q}[t]/\langle t^2+t-1\rangle.$
  The matroid $M$ is the rank $3$ linear matroid determined by the
  columns of the matrix:
  \[
    A =
    \begin{pmatrix}1&1&1&0&0&0&1&1&1+t\\
      t&0&-1&1&0&1&1+t&0&1\\
      0&0&0&0&1&1&1&1&1
    \end{pmatrix}.
  \]
  By construction, $M$ has a $K$-representation.
  Pak \cite[Example 12.3]{Pak10}
  shows that this matroid is not $\mathbb{Q}$-representable.
  To see that $M$ is $\mathbb{Q}$-algebraic, we construct a prime
  ideal $P$ in $\mathbb{Q}[x_1, \ldots, x_9]$ such that $M(P)$ is
  isomorphic to $M$.
  Define an auxiliary ideal $P'$ of $\mathbb{Q}[x_1, \ldots, x_9,t]$ by
  \begin{align*}
    P' = \langle &-x_{2}+x_{3}+x_{4},\,-x_{2}+x_{3}-x_{5}+x_{6},\,-x
    _{2}-x_{5}+x_{8},\\
    & -tx_{2}+t_{3}+x_{1}-x_{2},\,-tx_{2}+tx_{3}-2x_{2}+x
    _{3}-x_{5}+x_{7},\\
    &-tx_{2}-2x_{2}+x_{3}-x_{5}+x_{9},\,t^{2}+t-1\rangle.
  \end{align*}
  The linear forms in the list of generators of $P'$
  generate the the relations on the columns of $A$.  Eliminating $t$
  from $P'$
  gives a prime ideal in $\mathbb{Q}[x_1, \ldots, x_9],$  which is:
  \[
    \begin{array}{l}
      \langle x_4+x_5-x_6,\hspace{.6cm} x_3+x_6-x_8, \hspace{.6cm}
      x_2+x_5-x_8,\hspace{.6cm}
      x_1+x_5-x_9, \\
      x_5x_7-x_7^2-x_6x_8+x_8^2-x_5x_9+x_6x_9+x_7x_9-x_8x_9, \\
      x_6^2-x_6x_7-x_7^2-2x_6x_8+x_7x_8+x_8^2+x_6x_9+2x_7x_9-x_8x_9-x_9^2,\\
      x_5x_6-x_6x_7-x_5x_8-x_6x_8+x_8^2+x_6x_9+x_7x_9-x_9^2, \\
      x_5^2-x_7^2-x_5x_8-2x_6x_8+x_7x_8+x_8^2-x_5x_9+2x_6x_9+x_7x_9-x_9^2
      \rangle.
  \end{array}\]
  We define $P$ to be this ideal.  The algebraic matroid $M(P)$ is isomorphic to
  $M$ because the points of $V(P)$ are parameterized by the row space of $A$.
  The (projective) variety $V(P$)  has only one rational point, which
  is singular. It
  follows that the set of points in $V(P)$ satisfying the conclusion of
  Theorem~\ref{thm: specialization} is empty.
\end{example}

Let us revisit Example \ref{ex: diff} in light of everything we have
seen.  Even though $\mathbb{R}$ is {\em not} algebraically closed, the
Jacobian construction worked, and, when we specialized $A$,
we didn't just use a real point, we used a {\em rational} one.
\begin{example}\label{ex: rational}
  Let $P = \langle x_{00}x_{11} - x_{01}x_{10}\rangle$ be the ideal
  underlying the independence model in Example \ref{ex: prob}.  The
  derivation in Example \ref{ex: prob} shows that $V(P)$ is the
  Zariski closure of the image of the map,
  \[
    (p_0,p_1,q_0,q_1)\mapsto
    \begin{pmatrix}
      q_0 \\ q_1
    \end{pmatrix}
    \begin{pmatrix}
      p_0 & p_1
    \end{pmatrix},
  \]
  which is defined by polynomials with rational coefficients.  Since
  the rational
  points are dense in $\mathbb{R}^4$, and the parameterizing map sends rational
  points to rational points, these points are dense in $V(P)$ as
  well.  Because the
  set of points where the Jacobian construction fails is nowhere dense, we
  can find a rational point that specializes to the right matroid.
\end{example}
This general setting is common in applications beyond the independence
model.  In fact, all the ones listed in the introduction have the
following setup.
We fix an affine parameter space $k^m$ and a polynomial map
\[
  f : k^m \to k^n\qquad f(z) = (f_1(z), \ldots, f_n(z)),
\]
where each of the $f_i$ is in $S = k[x_1, \ldots, x_m]$.
The ideal $P$ is then the vanishing ideal $I(f(k^m))$ of the image of
$f$.  In this setup, a hypothesis on $f$, which is often satisfied in
practice, lets us substantially relax the hypothesis on $k$.  If
$k'$ is a subfield of $k$, we say that $f : k^m\to k^n$ is
{\em defined over $k'$} if all the coefficients of the
components of $f$ lie in $k'$.
\begin{theorem}[{cf \cite[Sec. 5.2]{GHT}}]\label{thm: param}
  Let $k'\subseteq k$  be a subfield of $k$ that is Zariski dense in $\overline{k}$.
  Suppose that  $P\subseteq S$ is the vanishing ideal of the image of a map
  $f : k^m\to k^n$ that is defined over $k'$.  Then the algebraic matroid $M(P)$
  is $k'$-representable.
\end{theorem}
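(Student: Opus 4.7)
The plan is to imitate the proof of Theorem~\ref{thm: K/k-linear}(2), replacing the Jacobian of a generating set of $P$ with the Jacobian of the parameterization $f$, and specializing at a point of $(k')^m$ rather than at a point of $V(P)$.

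First I would form the Jacobian $J = (\partial f_i/\partial x_j) \in k'[x_1,\ldots,x_m]^{n\times m}$, whose entries lie in $k'[x_1,\ldots,x_m]$ because $f$ is defined over $k'$. Let $K = k'(x_1,\ldots,x_m)$ and view the transpose $J^T$ as an $m\times n$ matrix over $K$. The key claim is that $M(J^T) = M(P)$ on the common ground set $\{1,\ldots,n\}$, and follows from a chain of equivalences: a subset $I$ is independent in $M(P)$ iff the corresponding coordinates on $k^n$ are algebraically independent over $k$ modulo $P$, iff their pullbacks $\{f_i : i\in I\}$ are algebraically independent in $k[x_1,\ldots,x_m]$ over $k$ (using that $k$ is infinite, a consequence of the density hypothesis, so that $P$ coincides with $\ker f^*$), iff the $|I|\times m$ submatrix of $J$ with rows indexed by $I$ has rank $|I|$ over $K$ by the Jacobi criterion in characteristic zero; this last condition is exactly linear independence of the columns of $J^T$ indexed by $I$.

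Next I would specialize along the lines of Theorem~\ref{thm: specialization}. Let $r = \rank M(P)$. For each basis $B$ of $M(P)$, fix a non-zero $r\times r$ minor $p_B \in k'[x_1,\ldots,x_m]$ of $J^T\llbracket B\rrbracket$. The product $p = \prod_B p_B$ is a non-zero element of $k'[x_1,\ldots,x_m]$; Zariski density of $k'$ in $\overline{k}$ forces $k'$ to be infinite, and a standard induction on the number of variables then produces $z\in(k')^m$ with $p(z)\neq 0$. At such a $z$, every basis $B$ of $M(P)$ remains independent in $M(J^T(z))$ because $p_B(z)\neq 0$, and every set dependent in $M(J^T)$ remains dependent in $M(J^T(z))$ because all of its relevant minors vanish identically as polynomials in $k'[x_1,\ldots,x_m]$. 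Hence $M(J^T(z)) = M(P)$, and the matrix $J^T(z)\in(k')^{m\times n}$ is the desired $k'$-representation.

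The chief technical obstacle is the matroid equality in the second step, which rests on the Jacobi criterion and therefore on the standing characteristic-zero hypothesis of Section~\ref{sec: char zero}; the density hypothesis plays the role here that algebraic closure played in Theorem~\ref{thm: K/k-linear}(2), guaranteeing the existence of a rational specialization point avoiding a finite union of hypersurfaces cut out by the $p_B$.
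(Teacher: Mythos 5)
Your proof is correct under the characteristic-zero hypothesis implicit in Section~\ref{sec: char zero}, but it takes a genuinely different route from the paper's sketch. The paper passes to $\overline{k}$, invokes Theorem~\ref{thm: K/k-linear}(2) to obtain a nonempty Zariski-open set $U\subseteq V(P)$ where specializing a cokernel of the transposed Jacobian of generators of $P$ recovers $M(P)$, and then observes that $k'$-points of the image of $f$ are dense and therefore meet $U$. You instead work entirely on the parameter side: the Jacobi criterion shows directly that the columns of the parametric Jacobian $J^T$, viewed over $K = k'(x_1,\ldots,x_m)$, already represent $M(P)$, and you then specialize $J^T$ at a point of $(k')^m$ avoiding a proper hypersurface. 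Your route has two concrete advantages: you never need to locate rational points on $V(P)$ itself (only on affine parameter space, where density of $k'$-points is immediate), and the Jacobian entries are polynomials rather than fractions in $k(P)$, so the denominator-clearing from Theorem~\ref{thm: specialization} disappears and a single nonvanishing polynomial $p = \prod_B p_B$ suffices. The cost is that the matroid equality $M(J^T) = M(P)$ must be imported as an external result (the Jacobi criterion), whereas the paper has already built the corresponding cotangent-side statement into Lemma~\ref{lem: diff support} and Theorem~\ref{thm: K/k-linear}(1); you also quietly use that Jacobian rank over $k'(x_1,\ldots,x_m)$ agrees with rank over $k(x_1,\ldots,x_m)$, which holds because the entries lie in $k'[x_1,\ldots,x_m]$.

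One caveat worth flagging: the introduction asserts that the characteristic-zero hypothesis is not needed for Theorem~\ref{thm: param}, yet both your argument and the paper's sketch rely on it --- yours through the Jacobi criterion, the paper's through its invocation of Theorem~\ref{thm: K/k-linear}. A truly characteristic-free version would need a separability assumption on $f$ to exclude the failure mode of Section~\ref{sec: lindstrom}, where $\d(x^p)=0$ makes the Jacobian blind to genuine algebraic independence. As written, your proof matches the actual scope of what is proved in Section~\ref{sec: char zero}.
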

\begin{proof}[Proof sketch]
  The map $f$ extends to a map $\overline{k}^m\to \overline{k}^n$ in a
  natural way%
  \footnote{Because $(\overline{k})^n\cong k^n\otimes_k \overline{k}$.}
  that preserves the property that it sends $k'$-points to $k'$-points.
  In particular, the $k'$-points of the image of $f$ are dense in it.
  Because we have passed to $\overline{k}$, Theorem \ref{thm: K/k-linear}
  now applies.  The set $U$ used in the proof of
  Theorem \ref{thm: K/k-linear} (2) is open, and so, by the previous
  discussion, it contains a $k'$-point, which is also in the image of the
  original map $f$.
\end{proof}
A commonly occurring case is when $k' =\mathbb{Q}$, $k = \mathbb{R}$, and
$\overline{k} = \mathbb{C}$, as in Example \ref{ex: rational}.

\section{The Wild World of Characteristic $p$}\label{sec: lindstrom}
In this section, we show that the construction in the proof of
Theorem~\ref{thm: K/k-linear} (1) may fail if $k$
has characteristic $p$ using an example of Lindstr\"om. 
We also briefly discuss the idea of a matroid flock introduced in \cite{BDP}, which associates a collection of linear matroids to an algebraic matroid over a field of positive characteristic.

\subsection{The Non-Pappus Matroid is algebraic but not linear}
In this section we describe work of Lindström \cite{L83,L86}, showing that, for each prime $p$, the so-called non-Pappus matroid is $\operatorname{GF}(p^2)$-algebraic but not linear and illustrate how the Jacobian construction fails for this construction.  

The non-Pappus matroid derives its name from its connection to Pappus's Theorem in projective geometry which says that if 
$v_1, v_2, v_3$ and $v_4, v_5, v_6$ are collinear points in the projective plane, then 
the points $v_7, v_8, v_9$ defined by the intersections of the 
lines $v_7 = v_1v_5\cap v_2v_4$, $v_8 = v_1v_6\cap v_3v_4$, $v_9 = v_2v_6\cap v_3v_9$
must be collinear, as illustrated in Figure~\ref{fig: non-pappus}.  The non-Pappus matroid is the rank 3 matroid with ground set $\{1,\ldots, 9\}$ whose dependent 3-element sets correspond to the triples of points on the black lines in Figure~\ref{fig: non-pappus}.  So, $\{1,2,3\}, \{1,5,7\}, \ldots$ are dependent, but crucially, $\{7,8,9\}$ is an independent set.  
This implies that the non-Pappus matroid is not $k$-representable for any field $k$.
\begin{figure}[t]
  \begin{tikzpicture}[scale=3,
      node style/.style={draw=black, thick, circle, fill=white,
      minimum size=4pt, inner sep=0pt},
    font=\small]

    \coordinate (1) at (0,0);
    \coordinate (2) at ($(1)+(1, -0.087)$);  
    \coordinate (3) at ($(2)+(0.8, -0.07)$);  

    \coordinate (4) at (0,-1);
    \coordinate (5) at ($(4)+(1.2, -0.212)$);  
    \coordinate (6) at ($(5)+(0.7, -0.12)$);   

    \path[name path=L15] (1) -- (5);
    \path[name path=L24] (2) -- (4);
    \path[name intersections={of=L15 and L24, by=7pt}];

    \path[name path=L16] (1) -- (6);
    \path[name path=L34] (3) -- (4);
    \path[name intersections={of=L16 and L34, by=8pt}];

    \path[name path=L26] (2) -- (6);
    \path[name path=L35] (3) -- (5);
    \path[name intersections={of=L26 and L35, by=9pt}];

    \draw (1) -- (5);
    \draw (1) -- (6);
    \draw (2) -- (4);
    \draw (2) -- (6);
    \draw (3) -- (4);
    \draw (3) -- (5);
    \draw (4) -- (6);
    \draw (1) -- (3);
    \draw[red, dotted] ($(7pt)!-3mm!(9pt)$)-- ($(9pt)!-3mm!(7pt)$);

    \foreach \v in {1,2,3,4,5,6,7pt,8pt,9pt}
    \node[node style] at (\v) {};

    \node[draw=none] at ($(1)+(0,0.08)$) {$v_1$};
    \node[draw=none] at ($(2)+(0,0.08)$) {$v_2$};
    \node[draw=none] at ($(3)+(0,0.08)$) {$v_3$};
    \node[draw=none] at ($(4)+(0,-0.08)$) {$v_4$};
    \node[draw=none] at ($(5)+(0,-0.08)$) {$v_5$};
    \node[draw=none] at ($(6)+(0,-0.08)$) {$v_6$};
    \node[draw=none] at ($(7pt)+(0,-0.08)$) {$v_7$};
    \node[draw=none] at ($(8pt)+(0,-0.08)$) {$v_8$};
    \node[draw=none] at ($(9pt)+(0,-0.08)$) {$v_9$};

  \end{tikzpicture}
  \caption{Pappus's Theorem and the non-Pappus matroid}
  \label{fig: non-pappus}
\end{figure}

Lindstr\"om's algebraic representation of the non-Pappus matroid is derived from a ``geometric'' representation of the matroid as a configuration of nine points in $R^3$, where $R$ is a noncommutative ring of characteristic $p.$  To gain intuition for why such a representation is possible, we note that although Pappus's Theorem is regarded as a geometric result, it can also be interpreted as a result about linear algebra.  In fact, Pappus's Theorem can be proved using determinants as in \cite[Example 3.4.3]{S}. However, the determinantal proof relies on the fact that arithmetic over a field $k$ is commutative, so it breaks down over a non-commutative ring. 

Lindström is able to translate his geometric representation into an algebraic one.  For example, when $p=5$, his construction produces the ideal
\begin{equation*}\label{eq: non-pappus}
  \begin{aligned}
    P = \langle
    & x_{4}-x_{5}+(\alpha-1)x_{6}, \\
    & x_3-x_5+x_9,\\
    & x_{2}-\alpha x_{6}+x_{9}, \\
    & x_{1}+2\alpha    x_{5}-2\alpha x_{7}, \\
    &
    x_{9}^{5}+(-2\alpha-2)x_{5}+(2\alpha-1)x_{6}+(2\alpha+1)x_{7}+(-\alpha+1)x_{8},
    \\
    & x_{6}^{5}+2\alpha x_{5}-x_{6}-2\alpha x_{7}+x_{8} \rangle,
  \end{aligned}
\end{equation*}
where $\alpha \in \operatorname{GF}(p^2)$ is any scalar such that $\alpha^5\neq \alpha$.

However, the Jacobian construction used in the proof of Theorem \ref{thm: K/k-linear} (1) 
fails on the example, as it must, because the non-Pappus matroid is not linearly representable. 
Because $\Char k = 5$, we have $\d(x^5) = 0$, and so the
differentials of the last two generators have the same support.
Specifically, they are:
\begin{equation*}\label{eq: support change}
  (-2\alpha-2)\d x_{5}+(2\alpha-1)\d x_{6}+(2\alpha+1)\d x_{7
  }+(-\alpha+1)\d x_{8}
\end{equation*}
and
\[2\alpha\,\d x_{5}-\d x_{6}-2\alpha\,\d x_{7}+\d x_{8}.
\]
Canceling $\d x_5$ we get a nontrivial linear relation on $\d x_6,
\d x_7,$ and $\d x_8.$
However, $\{x_6,x_7,x_8\}$ is a basis in the algebraic matroid of $P$,
so the linear matroid on the differentials is not isomorphic to the
algebraic matroid,
even in $k(P)$.

\begin{remark}
  From the results in this section, for each prime $p$, there exists
  a field $k$ with
  characteristic $p$ such
  that the non-Pappus matroid is $k$-algebraic.
  At the other extreme, if, for some field $k$, the Fano matroid is
  $k$-algebraic,
  the characteristic of $k$ is $2$.
  In general, for a
  fixed matroid, one
  can ask for the set of all characteristics for which it has an
  algebraic representation.
  This question was first explored in \cite{L85,L86}; only very recently, it was almost completely settled by \cite{CV} using tools from \cite{EH}.
\end{remark}

\subsection{Frobenius Flocks}
As Lindstr\"om's algebraic representation of the non-Pappus matroid
shows, the Jacobian
construction from the proof of
Theorem~\ref{thm: K/k-linear} (1) may not produce a
$k(P)$-representation of $M(P)$ if $\Char k = p$.  However, following ideas of Lindstr\"om \cite{L85}, Bollen, Draisma, and Pendavingh \cite{BDP} describe a beautiful way to exploit the Jacobian construction using the Frobenius map, producing a ``flock'' of linear matroids associated to $M(P)$.  We give intuition for their construction in this section.

We begin with a toy example that illustrates Lindstr\"om's idea of applying the Frobenius map to the coordinates of the ambient space and then using the Jacobian construction.

\begin{example}
  Let $k = \mathbb{Z}/3\mathbb{Z}$, and let $P = \langle x^3 + x^{6}y - z\rangle
  \subseteq k[x,y,z]$.  Because $P$ has a single generator supported on all 
  the variables, 
  $M(P)$ is the uniform matroid of rank $2$ on three
  elements, which is
  $k(P)$-representable (even $k$-representable).  However, the
  Jacobian construction
  will not show us this, since
  \[
    \d( x^3 + x^{6}y - z) =
    x^{6}\d y - \d z,
  \]
  so $y$ and $z$ are dependent in the linear matroid on
  differentials.

  Consider the map that sends $(a,b,c) \mapsto (a^3,b,c)$ for
  $(a,b,c) \in k^3$.  To see the effect of this map on ideals, define
  $\widehat{P} = \langle x^3 + x^{6}y - z, t - x^3\rangle\subseteq k[x,y,z,t]$.
  Then the image of $V(P)$ has ideal
  \[
    P' = \widehat{P}\cap k[y,z,t] = \langle t + t^2y -z \rangle.
  \]
  We compute
  \[
    \d (t + t^2y -z) = (1 + 2yt)\d t + t^2\d y - \d z,
  \]
  which gives us a uniform matroid (but not the differential matroid
  we got from $P$).
\end{example}

In the example, we rescued the Jacobian construction
by sending $P$ to  $P'$ in a way that preserves the algebraic matroid but forces
the linear matroid on the coordinate differentials to change.  
In this case, the generator of $P'$ has no $p$-th powers, 
and so the differential matroid is isomorphic to $M(P)$.

Since we know that the non-Pappus matroid is not $k$-representable
for any field $k$, this approach,
suggested by Lindström \cite{L85}, is inherently limited.  Nonetheless, by considering all of the ways of 
applying Frobenius powers to the coordinates of our ambient space, we collect an assortment of imperfect 
linearizations, which we can think of as blurry snapshots of the original 
algebraic matroid.  The set of linear matroids obtained in this way is a ``matroid flock.'' In \cite{BDP}, 
Bollen, Draisma, and Pendavingh prove a series
of striking results about matroid flocks, among them that the
matroid flock is an invariant of $M(P)$ and that it has a finite
description. Matroid flocks are yet another example of the
beautiful interplay between algebra, linear algebra, and
combinatorics, and how even a ``failed'' construction can give rise
to beautiful new mathematics.

\section{Acknowledgments and declarations}
\subsection{Acknowledgments}
We thank June Huh for bringing the error in \cite{RST} to our
attention;  David Zurieck-Brown for helpful conversations;  
Shin-ichi Tanigawa and Matt Larson for helpful feedback on an 
earlier draft; two anonymous referees for comments that contributed greatly to our revision; and ICERM for its hospitality during the 
semester program 
``Geometry of Materials, Packings and Rigid Frameworks''.
\subsection{Funding}
This work was supported by the National Science Foundation 
under grant number DMS-1929284 and UK Research and Innovation under 
the EPSRC Small Grants Scheme grant UKRI1112.
\subsection{Conflicts of interest}
The authors have no conflicts of interest to declare.
\subsection{Contributions}
ZR, JS and LST wrote and edited the manuscript.

\begin{bibdiv}
  \begin{biblist}
    \bib{AP17}{article}{
      author={Adiprasito, Karim A.},
      author={Padrol, Arnau},
      title={The universality theorem for neighborly polytopes},
      journal={Combinatorica},
      volume={37},
      date={2017},
      number={2},
    pages={129--136}}

    \bib{BCD}{article}{
      title={Matroids over one-dimensional groups},
      author={Bollen, Guus P }, author={Cartwright, Dustin},
      author={Draisma, Jan},
      journal={International Mathematics Research Notices},
      volume={2022},
      number={3},
      pages={2298--2336},
      year={2022},
      publisher={Oxford University Press}
    }

    \bib{BDP}{article}{
      author={Bollen, Guus P },
      author={Draisma, Jan },
      author={Pendavingh, Rudi },
      title={Algebraic matroids and Frobenius flocks},
      journal={Advances in mathematics},
      volume={323},
      pages={688--719},
      year={2018},
      publisher={Elsevier}
    }

    \bib{BD}{misc}{
      author={Brakensiek, Joshua },
      author={Dhar, Manik },
      author={Gao, Jiyang },
      author={Gopi, Sivakanth },
      author={Larson, Matt },
      title={Rigidity matroids and linear algebraic matroids with
      applications to matrix completion and tensor codes},
      date={2024},
      eprint={arXiv:2405.00778},
    }

    \bib{CV}{article}{
      author={Cartwright, Dustin },
      author={Varghese, Dony },
      title={Characteristic sets of matroids},
      journal={European Journal of Combinatorics},
      volume={118},
      pages={103939},
      year={2024},
      publisher={Elsevier}
    }
\bib{owl.pdf}{book}{
  author={Drton, Mathias},
  author={Sturmfels, Bernd},
  author={Sullivant, Seth},
  title={Lectures on algebraic statistics},
  series={Oberwolfach Seminars},
  volume={39},
  publisher={Birkh\"auser Verlag},
  address={Basel},
  date={2009},
}
    \bib{EH}{article}{
        author={Evans, David M },
        author={Hrushovski, Ehud },
        title={Projective planes in algebraically closed fields},
        journal={Proceedings of the London Mathematical Society},
        volume={3},
        number={1},
        pages={1--24},
        year={1991}    
    }
    
    \bib{FS}{misc}{
      author={Feliu, Elisenda },
      author={Shiu, Anne },
      title={From chemical reaction networks to algebraic and
      polyhedral geometry -- and back again},
      date={2025},
      eprint={arXiv:2501.06354},
    }

    \bib{GGJ}{article}{
      author={Garamvölgyi, Dániel},
      author={Gortler, Steven J.},
      author={Jordán, Tibor},
      title={Globally rigid graphs are fully reconstructible},
      journal={Forum of Mathematics, Sigma},
      volume={10},
      date={2022},
      pages={e51},
    }

    \bib{GHT}{article}{
      author={Gortler, Steven J.},
      author={Healy, Alexander D.},
      author={Thurston, Dylan P.},
      title={Characterizing generic global rigidity},
      journal={Amer. J. Math.},
      volume={132},
      date={2010},
      number={4},
      pages={897--939},
    }

    \bib{GHRS}{article}{
      author={Gross, Elizabeth},
      author={Harrington, Heather A.},
      author={Rosen, Zvi},
      author={Sturmfels, Bernd},
      title={Algebraic systems biology: a case study for the Wnt pathway},
      journal={Bull. Math. Biol.},
      volume={78},
      date={2016},
      number={1},
      pages={21--51},
    }
    \bib{GS}{article}{
      author={Gross, Elizabeth},
      author={Sullivant, Seth},
      title={The maximum likelihood threshold of a graph},
      journal={Bernoulli},
      volume={24},
      date={2018},
      number={1},
      pages={386--407},
    }
    \bib{HS}{article}{
      author={Hollering, Benjamin},
      author={Sullivant, Seth},
      title={Identifiability in phylogenetics using algebraic matroids},
      journal={Journal of Symbolic Computation},
      volume={104},
      date={2021},
      pages={142--158},
    }

    \bib{I71}{article}{
      author={Ingleton, A. W.},
      title={Representation of matroids},
      conference={
        title={Combinatorial Mathematics and its Applications},
        address={Proc. Conf., Oxford},
        date={1969},
      },
      book={
        publisher={Academic Press, London-New York},
      },
      date={1971},
      pages={149--167},
    }

    \bib{lang}{book}{
      author={Lang, Serge},
      title={Algebra},
      series={Graduate Texts in Mathematics},
      volume={211},
      edition={3},
      publisher={Springer-Verlag, New York},
      date={2002},
      pages={xvi+914},
    }

    \bib{L83}{article}{
      author={Lindstr\"om, Bernt},
      title={The non-Pappus matroid is algebraic},
      journal={Ars Combin.},
      volume={16},
      date={1983},
      pages={95--96},
    }

    \bib{L85}{article}{
      author={Lindstr\"om, Bernt},
      title={On the algebraic characteristic set for a class of matroids},
      journal={Proceedings of the American Mathematical Society},
      volume={95},
      number = {1},
      year={1985},
      pages={147--151},
    }

    \bib{L86}{article}{
      author={Lindstr\"om, Bernt},
      title={The non-Pappus matroid is algebraic over any finite field},
      journal={Utilitas Math.},
      volume={30},
      date={1986},
      pages={53--55},
    }

    \bib{M85}{article}{
      author={Mn\"ev, N. E.},
      title={Varieties of combinatorial types of projective configurations and
      convex polyhedra},
      language={Russian},
      journal={Dokl. Akad. Nauk SSSR},
      volume={283},
      date={1985},
      number={6},
      pages={1312--1314},
    }

    \bib{M88}{article}{
      author={Mn\"ev, N. E.},
      title={The universality theorems on the classification problem of
      configuration varieties and convex polytopes varieties},
      conference={
        title={Topology and geometry---Rohlin Seminar},
      },
      book={
        series={Lecture Notes in Math.},
        volume={1346},
        publisher={Springer, Berlin},
      },
      date={1988},
    pages={527--543}}

    \bib{M91}{article}{
      author={Mn\"ev, N. E.},
      title={The universality theorem on the oriented matroid stratification of
      the space of real matrices},
      conference={
        title={Discrete and computational geometry},
        address={New Brunswick, NJ},
        date={1989/1990},
      },
      book={
        series={DIMACS Ser. Discrete Math. Theoret. Comput. Sci.},
        volume={6},
        publisher={Amer. Math. Soc., Providence, RI},
      },
      date={1991},
    pages={237--243}}

    \bib{MS}{article}{
      title={Computing circuit polynomials in the algebraic rigidity matroid},
      author={Mali{\'c}, Goran},
      author={Streinu, Ileana},
      journal={SIAM Journal on Applied Algebra and Geometry},
      volume={7},
      number={2},
      pages={345--385},
      year={2023},
      publisher={SIAM}
    }

    \bib{O11}{book}{
      author={Oxley, James},
      title={Matroid theory},
      series={Oxford Graduate Texts in Mathematics},
      volume={21},
      edition={2},
      publisher={Oxford University Press, Oxford},
      date={2011},
      pages={xiv+684},
    }

    \bib{Pak10}{webpage}{
      author={Pak, Igor},
      title={Lectures on discrete and polyhedral geometry},
      url={http://www.math.ucla.edu/~pak/book.htm},
      year={2010}
    }

    \bib{RG95}{article}{
      author={Richter-Gebert, J\"urgen},
      title={Mn\"ev's universality theorem revisited},
      journal={S\'em. Lothar. Combin.},
      volume={34},
      date={1995},
      pages={Art. B34h, approx. 15},
    }

    \bib{RST}{article}{
      author={Rosen, Zvi},
      author={Sidman, Jessica},
      author={Theran, Louis},
      title={Algebraic matroids in action},
      journal={Amer. Math. Monthly},
      volume={127},
      date={2020},
      number={3},
    pages={199--216}}

    \bib{S87}{article}{
      author={Sturmfels, Bernd},
      title={On the decidability of Diophantine problems in
      combinatorial geometry},
      journal={Bulletin of the American Mathematical Society},
      volume={17},
      date={1987},
      number={1},
      pages={121--124}
    }

    \bib{S}{book}{
   author={Sturmfels, Bernd},
   title={Algorithms in invariant theory},
   series={Texts and Monographs in Symbolic Computation},
   edition={2},
   publisher={Springer Vienna},
   date={2008},
   pages={vi+197},
    }

    \bib{W76}{book}{
      author={Welsh, D. J. A.},
      title={Matroid theory},
      series={L. M. S. Monographs},
      volume={No. 8},
      publisher={Academic Press,
      London-New York},
      date={1976},
      pages={xi+433}
    }

  \end{biblist}
\end{bibdiv}

\end{document}